\documentclass[12pt]{article}
\usepackage[T1]{fontenc}
\usepackage[utf8]{inputenc}
\usepackage{lmodern}
\usepackage[a4paper,margin=1in]{geometry}
\usepackage{amsmath,amssymb,amsthm,mathtools}
\usepackage{authblk}
\usepackage{microtype}
\usepackage[hidelinks]{hyperref}
\newtheorem{theorem}{Theorem}[section]
\newtheorem{lemma}[theorem]{Lemma}
\newtheorem{claim}[theorem]{Claim}
\DeclareMathOperator{\AR}{AR}
\DeclareMathOperator{\exdir}{ex_{dir}}
\newcommand{\dP}{\overrightarrow{P}}
\newcommand{\sK}{\overleftrightarrow{K}}
\title{Spanning Even Linear Forests with Isolated Edges\\
	Share the Anti-Ramsey Numbers of Perfect Matchings}
\author[,a]{Ali Ghalavand\thanks{Corresponding author.
		E-mail: \texttt{alighalavand@himis-sz.cn}}}

\affil[a]{Hetao Institute of Mathematics and Interdisciplinary
	Sciences (HIMIS), Shenzhen 518000, Guangdong, P. R. China}

\date{}
\begin{document}
\maketitle

\begin{abstract}
The anti-Ramsey number $\AR(n,F)$ is the maximum number of colors in an
edge-coloring of the complete graph $K_n$ containing no rainbow copy of $F$.
I prove that every spanning linear forest on an even number $n\ge6$ of
vertices whose components have even orders and which has an isolated edge
has the same anti-Ramsey number as a perfect matching on $n$ vertices.
The orders of the longer path components may be different.
\end{abstract}

\noindent\textbf{Keywords:} anti-Ramsey number; rainbow graph; spanning linear
forest; perfect matching; directed path.

\medskip
\noindent\textbf{2020 Mathematics Subject Classification:} 05C55, 05C35, 05C38.

\section{Introduction}

All undirected graphs considered in this paper are finite and simple.
For a graph $G$, its vertex and edge sets are denoted by $V(G)$ and $E(G)$,
and its order and size are $|V(G)|$ and $e(G)=|E(G)|$, respectively.
The complete graph on $n$ vertices is denoted by $K_n$, and $P_n$ denotes
the path on $n$ vertices, so that $P_n$ has $n-1$ edges.
The symbol $G\cup H$ denotes the vertex-disjoint union of two graphs,
and $kG$ denotes the disjoint union of $k$ copies of $G$.
We write $G\cong H$ for graph isomorphism and $G\subseteq H$ when $H$
contains a subgraph isomorphic to $G$; this subgraph need not be induced.
A linear forest is a graph whose components are paths. An isolated edge
means a component isomorphic to $P_2$. We call a linear forest \emph{even} if every
component has an even number of vertices. A perfect matching is a set of
pairwise vertex-disjoint edges covering all vertices. When a matching $M$ is contained in
a graph $G$, we also use $M$ for its edge set, and $G-M$ denotes deletion
of those edges while retaining all vertices.

An edge-coloring $c$ assigns a color to each edge of a graph; it is not
required to be proper. A subgraph is \emph{rainbow} if all its edges have
distinct colors. For a graph $F$ of order at most $n$, the anti-Ramsey number
$\AR(n,F)$ is the maximum number of colors in an edge-coloring of $K_n$
with no rainbow copy of $F$. A \emph{representative graph} of a coloring is a spanning subgraph obtained
by choosing exactly one edge of each color. It is therefore rainbow and
has as many edges as the coloring has colors.

Anti-Ramsey theory was introduced by Erd\H{o}s, Simonovits, and S\'os
\cite{ESS}. Classical results on paths and cycles were obtained by
Simonovits and S\'os \cite{SS} and Montellano-Ballesteros and
Neumann-Lara \cite{MBNL}, respectively. For matchings, Schiermeyer
\cite{Sch} determined the rainbow threshold for sufficiently large
host orders. Chen, Li, and Tu \cite{CLT} and Fujita, Kaneko,
Schiermeyer, and Suzuki \cite{FKSS} determined the anti-Ramsey numbers
of $tP_2$ for $n\ge2t+1$. Perfect matchings form the remaining
spanning case, which was settled by Haas and Young \cite{HY}.
Jahanbekam and West \cite{JW} studied the related packing problem of
forcing edge-disjoint rainbow spanning subgraphs, including perfect
matchings, cycles, and trees. These results provide a broader setting
for the study of rainbow structures that cover the entire host graph.

Linear forests have also received substantial attention in anti-Ramsey
theory. Bialostocki, Gilboa, and Roditty \cite{BGR} determined the
anti-Ramsey numbers of graphs with at most four edges, including
$P_3\cup P_2$, $P_3\cup2P_2$, and $P_4\cup P_2$.
Gilboa and Roditty \cite{GR} developed methods for studying the
disjoint union of a fixed graph with copies of $P_2$ or $P_3$.
For sufficiently large host orders, they obtained exact values for
several short-path forests and bounds for a path together with a
matching. Fang, Gy\H{o}ri, Lu, and Xiao \cite{FGLX} obtained bounds for
linear forests and determined the exact anti-Ramsey number of $2P_4$.
Xie and Yuan \cite{XY} subsequently determined the exact anti-Ramsey
numbers of fixed linear forests containing an even-order path for
sufficiently large complete host graphs, and characterized the extremal
colorings. The spanning case, in which the forest uses every vertex of
the host graph, requires an exact treatment at the smallest possible
host order.

Recent work has focused on forests formed by short paths and isolated
edges. He and Jin \cite{HJ} determined anti-Ramsey numbers for
$P_3\cup tP_2$ and $2P_3\cup tP_2$ in specified ranges of the host order.
Jie, He, and Jin \cite{JHJ} determined anti-Ramsey numbers of
$kP_3\cup tP_2$ for $k\ge2$, $t\ge(k^2-k+4)/2$, and
$n\ge3k+2t+1$.
Jie and Jin \cite{JJ} investigated spanning forests of the form
$kP_3\cup tP_2$ under a condition relating the numbers of the two types
of components. Ghalavand and Li \cite{GL} determined
$\AR(3k+2t,kP_3\cup tP_2)$ for all $k\ge1$ and $t\ge2$, without
an additional relation between $k$ and $t$.
These spanning results concern forests with three-vertex path components.
For even components, Ghalavand, Jie, Jin, Li, and Pan
\cite{GJJLP} established the following result for spanning forests
consisting of paths on four vertices and isolated edges.

\begin{theorem}{\rm\cite{GJJLP}}\label{lem:published-p4}
If $k,t\ge1$, then
\[
 \AR(4k+2t,kP_4\cup tP_2)=\AR(4k+2t,(2k+t)P_2).
\]
\end{theorem}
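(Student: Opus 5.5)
The inequality $\AR(4k+2t,kP_4\cup tP_2)\ge\AR(4k+2t,(2k+t)P_2)$ is immediate. Any $kP_4\cup tP_2$ on $n=4k+2t$ vertices contains a perfect matching $(2k+t)P_2$: take the two end edges of each $P_4$ together with every isolated edge. So a coloring of $K_n$ with no rainbow perfect matching has no rainbow $kP_4\cup tP_2$ either. All the work is therefore in the upper bound. Let $c$ be a coloring of $K_n$ with $\AR(n,(2k+t)P_2)+1$ colors. I must show that it contains a rainbow $kP_4\cup tP_2$.

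I would use the Haas--Young value for perfect matchings. For $n\ge 8$ (and with the standard small exceptions) it is $\binom{n-3}{2}+2$, coming from the extremal coloring that makes $K_{n-3}$ rainbow and adds two extra colors. By the definition of $\AR$, $c$ contains a rainbow perfect matching $M$. Among all rainbow perfect matchings, choose $M$ together with a rainbow spanning subgraph extending it so that the number of $P_4$'s obtainable is maximal. Concretely: fix a representative graph $G\supseteq M$, which has $\binom{n-3}{2}+3$ edges. Then consider the auxiliary structure on the $m=2k+t$ matching edges. Two matching edges $e,f$ can be joined into a $P_4$ if some edge between an endpoint of $e$ and an endpoint of $f$ has a color outside $c(M)$. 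Call this a \emph{connector}, and note that distinct pairs need connectors of distinct colors. So the goal is to find $k$ disjoint pairs of matching edges with connectors of pairwise distinct colors, none lying in $c(M)$. This is a rainbow matching problem on the quotient graph of $K_m$.

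The key counting step is as follows. $G$ has $\binom{n-3}{2}+3$ edges, and at most $m$ colors are used by $M$. Hence at least $\binom{n-3}{2}+3-m$ colors are available for connectors. Each pair of matching edges spans only $4$ cross edges, so a missing rainbow system of $k$ connectors should force the colors to be concentrated in few vertices. I would proceed by a maximality argument: take a maximum set of disjoint connected pairs with distinct colors. If it has fewer than $k$ pairs, the remaining $\ge t+2\ge 3$ free matching edges carry many color classes. These classes must either be repeats of used colors or lie inside $c(M)$, and this yields at most roughly $\binom{n-3}{2}$ colors, a contradiction. When counting fails, I would use swapping arguments: replace a matching edge $ab$ by an edge of a new color lying inside a $P_4$, or re-pair vertices. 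This lets me exploit that $M$ may be chosen differently.

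The main obstacle is the small cases and the near-extremal structure, where the count leaves almost no slack. I would handle these by a stability-type analysis. If the coloring is close to the Haas--Young extremal one (a rainbow $K_{n-3}$ plus few colors on three vertices), I would build the $P_4$'s explicitly inside the rainbow clique. The isolated edge requirement $t\ge1$ lets me route the three special vertices through $P_2$ components and the forest's edges. I would check $n=6$ ($k=t=1$) and $n=8$ by hand, since the Haas--Young formula differs there.
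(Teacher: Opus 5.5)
\textbf{Gap: the step producing $k$ disjoint connected pairs is never proved.}

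Your lower bound is correct. Your reduction is also the right start: a representative graph $G\supseteq M$ and a quotient on the $s=2k+t$ matching edges. It is the same opening as the paper's proof of Theorem~\ref{thm:central}, which covers this statement through Theorem~\ref{thm:even-forest}. But you never show that $k$ disjoint connected pairs exist.

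The maximality count you sketch uses only one constraint: no connector may join two leftover matching edges. That constraint alone cannot give a contradiction. Take $t=1$, $k-1$ chosen pairs, and three leftover matching edges with no $G$-edges among them. Then $G$ can still have $s+4\binom{s-3}{2}+12(s-3)=2s^2-s-12$ edges, which exceeds $\AR(2s,sP_2)+1=2s^2-7s+9$ for $s\ge7$. Such a $G$ actually has more than $k-1$ pairs, and that is exactly the information your count discards. The announced fallback (swapping arguments, stability analysis) is unspecified.

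It is also partly aimed at the wrong target. The value $\binom{n-3}{2}+2$ is the Haas--Young value only for $n\ge14$. For $6\le n\le12$ the value is $\frac{(s-2)(3s+1)}{2}+1$, which is larger. It is attained by a different coloring: every edge meeting a fixed $(s-2)$-set gets its own color, and the complementary $K_{s+2}$ gets one further color. So for $n=8,10,12$, having $\binom{n-3}{2}+3$ colors does not even guarantee the rainbow perfect matching $M$ you start from.

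\textbf{What closes the gap.} You need a global matching bound; with it, no stability analysis is needed.
\begin{itemize}
\item Work only inside $G$. Its edges have pairwise distinct colors, so every edge of $G-M$ is a valid connector and no color bookkeeping is required.
\item Let $Q$ be the graph on the $s$ matching edges, with two adjacent when $G$ has an edge between them. Two matching edges span only four edges, so $e(Q)\ge(q-s)/4$.
\item A $k$-matching of $Q$, together with the $t$ unused matching edges, is a rainbow $kP_4\cup tP_2$.
\item Since $t\ge1$, we have $k\le(s-1)/2$. By the Erd\H{o}s--Gallai theorem, a graph on $s$ vertices with no $k$-matching has at most $\max\{\binom{2k-1}{2},\frac{(k-1)(2s-k)}{2}\}\le\max\{\binom{s-2}{2},\frac{(s-3)(3s+1)}{8}\}$ edges.
\item For $s\ge7$ we have $q-s\ge2s^2-8s+9$. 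Then $(2s^2-8s+9)/4$ exceeds the two terms by $(2s-3)/4$ and $((s-4)^2+5)/8$ respectively.
\item For $s=3,4,5,6$, the values $6,14,25,39$ give $e(Q)\ge1,3,6,9$. A graph without the required matching has at most $0,0,4,5$ edges.
\end{itemize}

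Completed this way, your route is simpler than the paper's for this special case. The paper must thread a path through $s-1$ matching edges to obtain $P_{2s-2}\cup P_2$. That forces it to orient the quotient by an optimal endpoint labeling, losing only a factor $2$, and to invoke directed-path extremal bounds. A matching in $Q$ tolerates the cruder factor $4$, and the hypothesis $t\ge1$ enters exactly through $k\le(s-1)/2$.
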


The present paper extends Theorem~\ref{lem:published-p4} to arbitrary even
component orders. More precisely, we prove that if
\[
 F\cong P_{2a_1}\cup\cdots\cup P_{2a_q}\cup tP_2,
 \qquad a_i\ge2,\quad t\ge1,
\]
where $q\ge0$ and $n=2\sum_{i=1}^{q}a_i+2t\ge6$, then
$\AR(n,F)=\AR(n,\frac{n}{2}P_2)$. Here the union and sum over the longer
paths are empty when $q=0$, so this statement also includes perfect
matchings. No equality among the integers $a_1,\ldots,a_q$ is assumed.

Every even linear forest $F$ on $2s$ vertices with an isolated edge satisfies
$sP_2\subseteq F\subseteq P_{2s-2}\cup P_2$. Thus it is enough to determine
$\AR(2s,P_{2s-2}\cup P_2)$. We use a representative graph containing a
rainbow perfect matching and select endpoint labels that maximize the
number of arcs in an associated digraph. Comparing this labeling with
the result of swapping the labels on a single matching edge shows that
at least half of the nonmatching edges contribute arcs. The directed
extremal results of Zhou and Li \cite{ZL}, together with the auxiliary
bounds proved here, then yield a long directed path. Expanding its
vertices back into matching edges produces a rainbow path disjoint
from one unused matching edge.

We specify the directed notation used in that reduction. A digraph $D$
has a vertex set $V(D)$ and an arc set $A(D)$, with no loops and at most
one arc in each direction between two distinct vertices. Opposite arcs
are allowed. Its number of arcs is $a(D)=|A(D)|$.
The notation $u\to v$ means that $(u,v)\in A(D)$.
For a vertex $v$, its outdegree and indegree are $d_D^+(v)$ and $d_D^-(v)$,
and its total degree is $d_D(v)=d_D^+(v)+d_D^-(v)$.
For $U\subseteq V(D)$, the induced subdigraph on $U$ is denoted by $D[U]$;
$D-v$ means $D[V(D)\setminus\{v\}]$.
For disjoint sets $U,W$, let $a_D(U,W)$ denote the total number of arcs
with one endpoint in $U$ and the other in $W$, counting both directions.
When $P$ is a path, $V(P)$ is its vertex set.
The complete symmetric digraph on $r$ vertices, denoted by $\sK_r$,
contains both arcs between every pair of distinct vertices.
An isolated directed two-cycle is a component on two vertices containing
both opposite arcs and having no arcs to or from other vertices.
A directed Hamilton cycle visits every vertex
exactly once before returning to its starting vertex.
The directed path on $n$ distinct vertices, all of whose arcs point along
the path, is denoted by $\dP_n$.
Finally, $\exdir(s,\dP_n)$ is the maximum number of arcs in a digraph on
$s$ vertices containing no copy of $\dP_n$.
The notation $\lfloor x\rfloor$ denotes the integer part of $x$.

Section~2 contains the published results used in the proof and the
auxiliary directed-path bounds. Section~3 determines $\AR(2s,P_{2s-2}\cup P_2)$
and deduces the result for spanning even linear forests. Section~4
concludes the paper.

\section{Auxiliary lemmas}

In this section, we collect the auxiliary results needed to prove our main theorems. We recall relevant results from the literature and establish the additional lemmas used in our proofs.

\begin{lemma}[Haas and Young~\cite{HY}]
\label{lem:HY} If $s\geq2$ is an integer, then 
\[
\AR(2s,sP_2)=
 \begin{cases}
  3,&s=2,\\[1mm]
  \dfrac{(s-2)(3s+1)}{2}+1,&3\le s\le6,\\[2mm]
  (s-2)(2s-3)+2,&s\ge7.
 \end{cases}
\]
\end{lemma}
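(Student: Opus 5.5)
This lemma is quoted from Haas and Young~\cite{HY}, so within the paper we simply cite it. If I had to reprove it, I would split the proof into a lower-bound construction and an upper bound, each handled separately in the three ranges of $s$.

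\emph{Lower bounds.} For $s\ge7$, I would fix two vertices $x,y$ of $K_{2s}$ and a set $S$ of $2s-3$ further vertices. The edges inside $S\cup\{x\}$ get distinct colors, giving $\binom{2s-2}{2}$ colors. The remaining vertex $z$ and everything else are colored so that $y$ and $z$ can only be matched using a repeated color. I would then adjust the construction to reach exactly $(s-2)(2s-3)+2$ colors; the target $(s-2)(2s-3)=\binom{2s-3}{2}-\binom{s-1}{2}\cdot0+\dots$ suggests a rainbow $K_{2s-3}$ plus a constant number of extra colors.

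For $3\le s\le6$, I would instead use the Turán-type construction. Fix a set $T$ of $s-2$ vertices, give every edge meeting $T$ its own color, and give all edges inside the remaining $s+2$ vertices one single color. A rainbow perfect matching could use at most one of those inner edges, but it must cover $s+2$ vertices with only $s-2$ partners from $T$. That forces at least two inner edges, a contradiction. The number of colors is
\[
\binom{s-2}{2}+(s-2)(s+2)+1=\frac{(s-2)(3s+1)}{2}+1 .
\]
The case $s=2$ is a direct check on $K_4$.

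\emph{Upper bound.} Given a coloring with one more color than claimed, I would take a representative graph $G$. By induction on $s$, after deleting two vertices, I would assume $G$ contains a rainbow matching covering $2s-2$ vertices. I would then use an alternating-path and switching argument, in the spirit of the Gallai--Edmonds or Tutte condition, applied to the color classes. Suppose no rainbow perfect matching exists. Then Tutte's theorem applied to $G$ gives a set $S$ with $o(G-S)>|S|$. Counting the edges of $G$, maximized over $|S|$, produces the two extremal structures: $|S|=s-2$ for small $s$, and a dense part with an isolated-like vertex for large $s$. The crossover between these two structures occurs at $s=7$.

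\emph{Main obstacle.} The Tutte bound controls only the edges of one representative graph. The main difficulty is handling colors whose representative edge can be swapped for another edge of the same color, so that the Tutte obstruction is destroyed. I would attack this by choosing the representative graph to maximize the size of its maximum matching, and then showing that any alternative edge of a color would contradict that maximality.
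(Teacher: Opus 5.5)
\textbf{Verdict.} Citing Haas and Young is exactly what the paper does, since the lemma is imported without proof, so as a citation your proposal agrees with the paper. The reproof you sketch would not stand, however: the $s\ge7$ lower-bound construction fails, and the upper-bound argument is missing its essential step.

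\emph{Lower bound for $s\ge7$.} The colouring you describe cannot avoid a rainbow perfect matching. The set $S\cup\{x\}$ has $2s-2$ vertices, so a rainbow $K_{2s-2}$ on it already uses $\binom{2s-2}{2}=(s-1)(2s-3)$ colours, which exceeds the target. Whatever colour the edge $yz$ gets, at most one edge of that rainbow $K_{2s-2}$ shares it. A perfect matching of $K_{2s-2}$ avoiding that edge, together with $yz$, is rainbow.

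The identity you half-state is the right one: $(s-2)(2s-3)=\binom{2s-3}{2}$. A working construction is as follows. Colour a $K_{2s-3}$ on a set $A$ rainbow. On the remaining vertices $x,y,z$, use two new colours: $\beta$ on $yz$ and on all edges from $x$ to $A$, and $\gamma$ on $xy$, $xz$ and on all edges from $y$ or $z$ to $A$. Since $|A|$ is odd, a perfect matching either joins all of $x,y,z$ to $A$, using two $\gamma$-edges, or uses one edge inside $\{x,y,z\}$ plus one edge from the third vertex to $A$, which have the same new colour. This gives $\binom{2s-3}{2}+2$ colours with no rainbow perfect matching. Your construction for $3\le s\le6$ and the check for $s=2$ are correct.

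\emph{Upper bound.} The Tutte count does not produce what you claim. Since $G$ is rainbow, it indeed has no perfect matching. But a graph on $2s$ vertices with a Tutte set of size $k$ has at most $\binom{k}{2}+k(2s-k)+\binom{2s-2k-1}{2}$ edges. This is maximized at $k=0$ (an isolated vertex plus $K_{2s-1}$), giving $\binom{2s-1}{2}$. That is far above the lemma's values, and no crossover at $s=7$ arises.

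The lemma's values are $\mathrm{ex}(2s,(s-1)P_2)+1$ for $3\le s\le6$ and $\mathrm{ex}(2s,(s-1)P_2)+2$ for $s\ge7$. They come from the Erd\H{o}s--Gallai extremal graphs $K_{s-2}+\overline{K}_{s+2}$ and $K_{2s-3}\cup\overline{K}_3$ for a matching of size $s-1$, not $s$. They appear only after you exploit the fact that every edge of a colour class can serve as its representative. That step is the entire content of the Haas--Young argument. Your last paragraph names it only as a plan and gives no mechanism for reaching exactly $\binom{2s-3}{2}+2$, in particular the $+2$ rather than the naive $+1$.

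The induction step also needs justification. Deleting two vertices can destroy up to $4s-3$ colours, while $\AR(2s,sP_2)-\AR(2s-2,(s-1)P_2)=4s-9$ for $s\ge8$. So the deleted pair must be chosen with care, for example of small degree in $G$. Even then, extending the resulting rainbow near-perfect matching over the two deleted vertices is precisely the hard part, and you leave it unaddressed.
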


The following Hamilton-cycle bound and its equality characterization are
special cases of the directed-cycle results of Zhou and Li \cite{ZL}.

\begin{lemma}[Zhou and Li~\cite{ZL}]\label{lem:cycle-density}
Let $H$ be a digraph on $r\ge3$ vertices without a directed Hamilton cycle.
Then $a(H)\le(r-1)^2$. If $r\ge4$ and $a(H)=(r-1)^2$, then $H$ is obtained
from $\sK_{r-1}$ by adding a vertex $z$ and either all arcs from $z$ to
the other vertices or all arcs from the other vertices to $z$.
\end{lemma}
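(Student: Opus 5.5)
We plan to argue by induction on $r$, working with the \emph{missing arcs} of $H$, i.e.\ the ordered pairs $(u,v)$ with $u\ne v$ and $u\not\to v$. Since a digraph on $r$ vertices has at most $r(r-1)$ arcs, the bound $a(H)\le(r-1)^2$ says that a digraph with at most $r-2$ missing arcs has a directed Hamilton cycle. The equality statement concerns digraphs with exactly $r-1$ missing arcs and no Hamilton cycle. The base case $r=3$ is a direct check: a digraph on three vertices with at most one missing arc still contains one of the two cyclic triangles.

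For the induction step, let $r\ge4$ and let $H$ have at most $r-2$ missing arcs. If $H=\sK_r$ there is nothing to prove. Otherwise choose a vertex $v$ incident with at least one missing arc. Then $H-v$ has $r-1$ vertices and at most $r-3=(r-1)-2$ missing arcs, so by induction it has a Hamilton cycle $u_1u_2\cdots u_{r-1}u_1$. We now try to insert $v$ between some consecutive pair $u_i,u_{i+1}$ (indices mod $r-1$). If this fails for every $i$, then for each $i$ at least one of $(u_i,v)$ and $(v,u_{i+1})$ is missing. These $r-1$ pairs of candidate arcs are pairwise disjoint, so $v$ alone is incident with at least $r-1$ missing arcs. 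This contradicts the assumption of at most $r-2$ missing arcs, so $H$ is Hamiltonian and the bound follows.

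For the equality case, assume $r\ge4$, exactly $r-1$ missing arcs, and no Hamilton cycle. We first show that some vertex $v$ has the property that $H-v$ is Hamiltonian. If some vertex is incident with at least two missing arcs, removing it leaves at most $r-3$ missing arcs, and the previous paragraph applies. Otherwise the missing arcs form a matching of ordered pairs. Removing an endpoint of one of them leaves at most $r-2=(r-1)-1$ missing arcs, and $H-v$ is Hamiltonian by the induction hypothesis, or directly by the characterization when there are exactly $(r-1)-1$ missing arcs, which we treat as a small subcase. The failed-insertion argument then forces all $r-1$ missing arcs to be incident with $v$, exactly one from each pair $\{(u_i,v),(v,u_{i+1})\}$. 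Consequently $H-v=\sK_{r-1}$, and every Hamilton cycle of $\sK_{r-1}$ must also block insertion.

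The main obstacle is this last step. We would use the freedom to reorder the cycle in $\sK_{r-1}$: if $v$ had both an in-neighbour $x$ and an out-neighbour $y$ among the $u_j$ with $x\ne y$, we choose a Hamilton cycle of $\sK_{r-1}$ in which $x$ is immediately followed by $y$, and insert $v$ between them. Hence either $v$ has no in-neighbours, or no out-neighbours, or its unique in-neighbour equals its unique out-neighbour. The third option would give at least $2(r-2)>r-1$ missing arcs for $r\ge4$, which is impossible. Counting then shows that all arcs from $v$, or all arcs to $v$, are present, which is exactly the claimed structure. Obtaining the Hamiltonicity of $H-v$ cleanly in the matching subcase is where we expect the most care to be needed.
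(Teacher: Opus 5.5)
Your proof is correct, and it takes a different route from the paper only in the sense that the paper gives no proof of this lemma. It imports both the bound and the equality characterization as special cases of Zhou and Li's Turán-type results for directed cycles.

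You instead give a self-contained argument, and its engine is the cyclic analogue of Lemma~\ref{lem:insertion}. If $v$ cannot be inserted into a Hamilton cycle $u_1\cdots u_{r-1}u_1$ of $H-v$, then each of the $r-1$ disjoint pairs $\{(u_i,v),(v,u_{i+1})\}$ contains a missing arc. This yields the bound by induction. In the extremal case it also shows that all $r-1$ missing arcs sit at $v$, so that $H-v\cong\sK_{r-1}$. Your reordering step then pins down the source/sink structure: choosing a Hamilton cycle of $\sK_{r-1}$ in which an in-neighbour $x$ is immediately followed by an out-neighbour $y\ne x$, together with the count $2(r-2)>r-1$ for $r\ge4$. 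Note that the equality part only needs the bound at order $r-1\ge3$, not an inductive characterization, so no extra base case is required.

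The citation buys brevity and places the lemma in a more general framework. Your argument buys an elementary proof that uses only ideas already present in Section~2.

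The step you flag as the main obstacle does not arise. With exactly $r-1$ missing arcs there are $2(r-1)$ endpoint incidences spread over $r$ vertices. Since $2(r-1)>r$ for $r\ge3$, some vertex is always incident with at least two missing arcs, so the ``matching'' subcase is empty.

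This matters because the justification you sketched for that subcase would not stand as written. A digraph on $r-1$ vertices with $(r-1)-1$ missing arcs need not be Hamiltonian, so ``by the induction hypothesis'' does not apply there. Replace that subcase with the pigeonhole remark and the proof is complete.
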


\begin{lemma}\label{lem:insertion}
Let $P=v_1\to v_2\to\cdots\to v_r$ be a longest directed path in a
digraph $D$. For every $x\in V(D)\setminus V(P)$,
\[
 a_D(\{x\},V(P))\le r-1.
\]
\end{lemma}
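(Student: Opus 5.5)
Fix $x\in V(D)\setminus V(P)$. Since $P$ is a longest directed path, $x$ cannot be prepended or appended to it, and it cannot be inserted between consecutive vertices. I would count the arcs between $x$ and $V(P)$ by pairing them along consecutive positions of $P$, and show that every pair contributes at most one arc.

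First record the three forbidden configurations, each of which would produce a directed path on $r+1$ vertices and so contradict maximality. If $x\to v_1$, then $x\to v_1\to\cdots\to v_r$ is longer. If $v_r\to x$, then $v_1\to\cdots\to v_r\to x$ is longer. If $v_i\to x$ and $x\to v_{i+1}$ for some $1\le i\le r-1$, then $v_1\to\cdots\to v_i\to x\to v_{i+1}\to\cdots\to v_r$ is longer. Hence $x\not\to v_1$ and $v_r\not\to x$, and for each $i\in\{1,\dots,r-1\}$ at most one of the arcs $(v_i,x)$ and $(x,v_{i+1})$ is present.

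Now group the $2r$ potential arcs between $x$ and $V(P)$. The pairs $\{(v_i,x),(x,v_{i+1})\}$ for $i=1,\dots,r-1$ contribute at most $r-1$ arcs. The remaining potential arcs are $(x,v_1)$ and $(v_r,x)$, and both are absent. These groups partition all $2r$ possible arcs (each in-arc $(v_i,x)$ with $i\le r-1$ and each out-arc $(x,v_j)$ with $j\ge2$ appear exactly once). Therefore
\[
 a_D(\{x\},V(P))\le r-1 .
\]

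The only point that needs care is checking that this pairing covers every potential arc exactly once. In particular, the two arcs between $x$ and a single vertex $v_i$ fall into different pairs (indices $i$ and $i-1$), so the argument does not need to exclude opposite arcs. The case $r=1$ also works: the only potential arcs are $(x,v_1)$ and $(v_1,x)$, both excluded, giving the bound $0$. I do not foresee any real obstacle.
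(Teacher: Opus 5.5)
Your proof is correct and follows essentially the same approach as the paper: you rule out $x\to v_1$ and $v_r\to x$, then pair $(v_i,x)$ with $(x,v_{i+1})$ for $1\le i\le r-1$ and use the insertion argument to allow at most one arc per pair. Your explicit check that these pairs and the two excluded arcs partition all $2r$ potential arcs, including the remark about opposite arcs and the case $r=1$, is a slightly more careful version of the paper's statement that ``every remaining possible arc \ldots\ belongs to exactly one of them.''
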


\begin{proof}
	Since \(P\) is a longest directed path, neither \(x\to v_1\) nor
	\(v_r\to x\) can be present; otherwise \(P\) could be extended to a
	directed path on \(r+1\) vertices.
	It remains to consider the arcs
	\[
	v_i\to x \quad (1\le i\le r-1)
	\qquad\text{and}\qquad
	x\to v_{i+1} \quad (1\le i\le r-1).
	\]
	For each \(i=1,\ldots,r-1\), at most one of the two arcs
	\[
	v_i\to x,\qquad x\to v_{i+1}
	\]
	can be present. Indeed, if both were present, then
	\[
	v_1\to\cdots\to v_i\to x\to v_{i+1}\to\cdots\to v_r
	\]
	would be a directed path on \(r+1\) vertices, contradicting the maximality
	of \(P\).	
	These \(r-1\) pairs are pairwise disjoint, and every remaining possible
	arc between \(x\) and \(V(P)\) belongs to exactly one of them. Hence the
	number of arcs between \(x\) and \(V(P)\) is at most \(r-1\).
\end{proof}

\begin{lemma}[Zhou and Li~\cite{ZL}]\label{lem:short-path}
For every integer $s\ge3$,
\[
 \exdir(s,\dP_3)=\left\lfloor\frac{s^2}{4}\right\rfloor.
\]
\end{lemma}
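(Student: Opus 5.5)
The plan is a structural classification of $\dP_3$-free digraphs followed by an elementary optimization. For the lower bound, split the $s$ vertices into sets $A$ and $B$ with $|A|=\lfloor s/2\rfloor$ and $|B|=\lceil s/2\rceil$, and take all arcs from $A$ to $B$. Every vertex of $A$ has indegree $0$ and every vertex of $B$ has outdegree $0$. Hence no vertex is the middle of a directed path $u\to v\to w$, and this digraph has $\lfloor s^2/4\rfloor$ arcs with no $\dP_3$.

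For the upper bound, let $D$ have no copy of $\dP_3$, and suppose a vertex $v$ has both an in-neighbour $u$ and an out-neighbour $w$. Then $w=u$, since otherwise $u\to v\to w$ is a copy of $\dP_3$. So all arcs at $v$ join $v$ to the single vertex $u$, and $u\leftrightarrow v$ is a directed two-cycle. The same argument applied to $u$ shows that $u$ has no other neighbour: an arc $w\to u$ would give $w\to u\to v$, and an arc $u\to w$ would give $v\to u\to w$. Therefore $V(D)$ splits into three parts:
\begin{itemize}
\item the vertices of $c$ isolated directed two-cycles;
\item a set $S$ of vertices with indegree $0$;
\item a set $T$ of vertices with outdegree $0$, where we put isolated vertices into $S$, say.
\end{itemize}
There is no arc inside $S$, because its head would have positive indegree. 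Similarly there is no arc inside $T$ and no arc from $T$ to $S$. Writing $x=|S|$ and $y=|T|$, this gives
\[
a(D)\le xy+2c,\qquad x+y+2c=s.
\]

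The remaining step, and the only place needing care, is to show $xy+2c\le\lfloor s^2/4\rfloor$. When $c=0$ this is the AM--GM bound for integers. When $c\ge1$ and $x+y\ge1$, move one two-cycle by putting one of its vertices into $S$ and the other into $T$. This loses $2$ arcs and gains $(x+1)(y+1)-xy=x+y+1\ge2$, so the bound does not decrease. Iterating reduces to $c=0$ as long as $x+y\ge1$ at each stage, which persists after the first move. The only remaining case is $x=y=0$, so $s=2c$; since $s\ge3$ this means $c\ge2$, and then $2c\le c^2=s^2/4$ directly. The hypothesis $s\ge3$ is exactly what excludes the single two-cycle on $2$ vertices, where $2>\lfloor 4/4\rfloor=1$.
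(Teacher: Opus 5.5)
Your proof is correct. The paper does not prove this lemma: it imports it from Zhou and Li as a special case of their extremal results for directed paths and cycles. It then uses it only as a black box in the small cases of Lemma~\ref{lem:directed-bound}, namely the bounds $4$, $6$ and $9$ for $s=4,5,6$.

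You replace the citation with a self-contained elementary argument, which runs as follows.
\begin{itemize}
\item In a $\dP_3$-free digraph, any vertex with both an in-neighbour and an out-neighbour lies in an isolated directed two-cycle.
\item Every other vertex is a source or a sink, and all remaining arcs go from sources to sinks.
\item Hence $a(D)\le xy+2c$ with $x+y+2c=s$.
\item The optimization is settled by the exchange step, since splitting a two-cycle changes the count by $x+y-1\ge 0$ when $x+y\ge1$.
\item The separate case $x=y=0$ is handled by $2c\le c^2$ for $c\ge2$.
\end{itemize}

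What your route buys is transparency. It shows exactly why $s\ge3$ is required: a single two-cycle on two vertices has $2>\lfloor 4/4\rfloor$ arcs. With a little more bookkeeping it also identifies the extremal digraphs. For example, a two-cycle plus an isolated vertex ($s=3$) and two disjoint two-cycles ($s=4$) tie with the complete bipartite orientation.

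The citation, by contrast, costs the paper nothing extra. It already relies on Zhou and Li for Lemma~\ref{lem:cycle-density}, and citing places the $\dP_3$ case inside that general theory.
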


\begin{lemma}\label{lem:two-outside}
	Let $D$ be a digraph on $r+2$ vertices, where $r\ge4$.
	If a longest directed path in $D$ has $r$ vertices, then
	$a(D)\le r^2$.
\end{lemma}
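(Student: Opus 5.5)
Fix a longest directed path $P=v_1\to\cdots\to v_r$ in $D$ and let $x,y$ be the two vertices outside $V(P)$. We split the arcs of $D$ into three groups and bound each one:
\[
a(D)=a(D[V(P)])+a_D(\{x,y\},V(P))+a(D[\{x,y\}]).
\]
By Lemma~\ref{lem:insertion}, $a_D(\{x\},V(P))\le r-1$ and $a_D(\{y\},V(P))\le r-1$. The pair $\{x,y\}$ carries at most $2$ arcs. The digraph $D[V(P)]$ has no directed path on more than $r$ vertices, and it clearly has at most $r(r-1)$ arcs. Adding these crude bounds gives $a(D)\le r(r-1)+2(r-1)+2=r^2+r$. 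This is too weak by $r$, so the proof must show that the three contributions cannot all be large simultaneously.

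\textbf{Step 1: the inside count.} If $D[V(P)]$ has a directed Hamilton cycle, then $D$ has no arc between $V(P)$ and $\{x,y\}$ in either direction. Indeed, any such arc could be attached to a suitable rotation of the cycle, giving a path on $r+1$ vertices. In this case $a(D)\le r(r-1)+2\le r^2$ for $r\ge2$, and we are done. So we may assume $D[V(P)]$ has no directed Hamilton cycle. Lemma~\ref{lem:cycle-density} then gives $a(D[V(P)])\le(r-1)^2$, with equality only for the extremal structure $\sK_{r-1}$ plus a source or sink vertex $z$.

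\textbf{Step 2: the outside arcs.} If $x$ and $y$ are joined by at least one arc, say $x\to y$, then $x\to y\to\cdots$ cannot start a long path. Concretely, $y$ has no out-arc to $v_1$ and $x$ has no in-arc from $v_r$. More sharply, an arc $x\to y$ combined with an arc $y\to v_{i+1}$ and an arc $v_i\to x$ would give a path on $r+2$ vertices. I would refine the pairing argument of Lemma~\ref{lem:insertion} to handle this. For each $i$, the four potential arcs $v_i\to x$, $x\to v_{i+1}$, $v_i\to y$, $y\to v_{i+1}$ are constrained. When $x$ and $y$ are adjacent, the count should drop to $a_D(\{x,y\},V(P))+a(D[\{x,y\}])\le 2(r-1)+1$. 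Roughly, each existing arc between $x$ and $y$ costs at least one slot, apart from one slack. With Step 1 this gives
\[
a(D)\le(r-1)^2+2r-1=r^2.
\]
This also covers the case where $x$ and $y$ are not adjacent, since then the outside contribution is at most $2(r-1)$.

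\textbf{Main obstacle.} The delicate case is a two-cycle $x\leftrightarrow y$ together with $D[V(P)]$ attaining exactly $(r-1)^2$ arcs. Here I would use the equality structure from Lemma~\ref{lem:cycle-density}. The complete symmetric digraph $\sK_{r-1}$ is Hamiltonian, so a second path argument shows that $x$ and $y$ send very few arcs into $V(P)$. Any arc between $\{x,y\}$ and the $\sK_{r-1}$ part can be combined with the two-cycle and a Hamilton path of $\sK_{r-1}$ through $z$ to give a path on $r+1$ vertices. This forces $a_D(\{x,y\},V(P))$ to be small, roughly at most $2$, and the total stays below $r^2$ because $r\ge4$.

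If $a(D[V(P)])\le(r-1)^2-1$ instead, I need the outside contribution to be at most $2r$. I would obtain this by checking, for each $i$, which of the four arcs above can coexist with $x\leftrightarrow y$. Each index $i$ should admit at most one arc to the pair $\{x,y\}$, apart from one end index. This case analysis over the indices $i$ is the step I expect to require the most care.
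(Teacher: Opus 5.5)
There is a gap: the inequality your Step~2 rests on is asserted rather than proved, and the one concrete mechanism you offer elsewhere is false. Both are easily repaired.

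Your Step~1 and your handling of the two-cycle $x\leftrightarrow y$ with $a(D[V(P)])=(r-1)^2$ match the paper's proof. There are no arcs between $\{x,y\}$ and the $\sK_{r-1}$ part and at most two arcs between $\{x,y\}$ and $z$, so $a(D)\le(r-1)^2+4<r^2$. Elsewhere, however, you misplace the difficulty. If $a(D[V(P)])\le(r-1)^2-1$, the bound $a_D(\{x,y\},V(P))+a(D[\{x,y\}])\le 2r$ that you say you need follows at once from Lemma~\ref{lem:insertion}, since $2(r-1)+2=2r$. The paper disposes of this case in one line. The per-index claim you propose for it is false. Take $r=4$, let $D[V(P)]$ be just the path $v_1\to v_2\to v_3\to v_4$, and add $x\leftrightarrow y$ together with $v_1\to x$, $v_1\to y$, $v_2\to x$, $v_2\to y$. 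No arc goes from $\{x,y\}$ to $V(P)$, so every directed path has at most $4$ vertices. Yet indices $1$ and $2$ each carry two arcs to $\{x,y\}$.

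The real content is Step~2, where $a_D(\{x,y\},V(P))+a(D[\{x,y\}])\le 2r-1$ is only asserted. The pairing of Lemma~\ref{lem:insertion}, even with your mixed-pair observation, gives only $2(r-1)$ arcs to $V(P)$. So when $x\leftrightarrow y$ you must still lose one slot globally, and the missing step is a switching argument.

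For $1\le i\le r-1$, call $v_i\to x$, $v_i\to y$ the in-arcs of index $i$ and $x\to v_{i+1}$, $y\to v_{i+1}$ its out-arcs. Every arc between $\{x,y\}$ and $V(P)$ belongs to some index, since arcs into $v_1$ and out of $v_r$ are excluded.
\begin{itemize}
\item With $x\leftrightarrow y$, an in-arc of index $i$ excludes both out-arcs of that index. This follows from Lemma~\ref{lem:insertion} or from the path $v_1\cdots v_i\,x\,y\,v_{i+1}\cdots v_r$ and its analogue with $y,x$. So each index carries at most two arcs, all of one kind.
\item Index $1$ has no out-arcs, because $y\to x\to v_2\to\cdots\to v_r$ would have $r+1$ vertices. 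Index $r-1$ has no in-arcs, because $v_1\to\cdots\to v_{r-1}\to x\to y$ would have $r+1$ vertices.
\item Suppose every index carried two arcs, and let $i$ be the first index with out-arcs. Then $2\le i\le r-1$, and both $v_{i-1}\to x$ and $y\to v_{i+1}$ are present. The path $v_1\to\cdots\to v_{i-1}\to x\to y\to v_{i+1}\to\cdots\to v_r$ then has $r+1$ vertices, a contradiction.
\end{itemize}
Hence $a_D(\{x,y\},V(P))\le 2r-3$.

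With this filled in, Step~2 gives $a(D)\le(r-1)^2+2r-1=r^2$ using only the inequality in Lemma~\ref{lem:cycle-density}. That route is shorter than the paper's, which needs the equality characterization and the separate analysis of $z$. It would also make your ``main obstacle'' paragraph unnecessary.
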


\begin{proof}
	Choose a longest directed path $P$ in $D$, and put
	$H=D[V(P)]$ and $W=V(D)\setminus V(P)=\{x,y\}$.
	By Lemma~\ref{lem:insertion},
	\begin{equation}\label{eq:cross}
		a_D(W,V(P))\le 2(r-1).
	\end{equation}
	Moreover,
	\[
	a(D)=a(H)+a_D(W,V(P))+a(D[W]),
	\qquad a(D[W])\le2.
	\]
	
	Suppose first that $H$ contains a directed Hamilton cycle.
	No arc can join $W$ to $V(H)$: an arc directed from an
	outside vertex to the cycle produces a directed path starting
	at that vertex and then visiting all $r$ cycle vertices;
	an arc in the opposite direction similarly produces a
	directed path ending at the outside vertex.
	Either construction gives a directed path on $r+1$ vertices,
	contrary to the choice of $P$. Thus
	\[
	a(D)\le r(r-1)+2\le r^2.
	\]
	
	We may therefore assume that $H$ contains no directed
	Hamilton cycle. Lemma~\ref{lem:cycle-density} gives
	$a(H)\le(r-1)^2$.
	If $a(H)\le(r-1)^2-1$, or if
	$a(H)=(r-1)^2$ and $a(D[W])\le1$, then
	\[
	a(H)+a(D[W])\le(r-1)^2+1.
	\]
	Together with \eqref{eq:cross}, this yields
	\[
	a(D)\le(r-1)^2+1+2(r-1)=r^2.
	\]
	
	It remains to consider the case
	$a(H)=(r-1)^2$ and $a(D[W])=2$.
	In particular, both $x\to y$ and $y\to x$ are present.
	By the equality statement in Lemma~\ref{lem:cycle-density},
	there is a vertex $z\in V(H)$ such that, writing
	$C=V(H)\setminus\{z\}$, we have
	$D[C]\cong\sK_{r-1}$, and either all arcs from $z$ to $C$
	or all arcs from $C$ to $z$ are present, with no arcs
	in the opposite direction.
	
	We first show that $a_D(W,C)=0$.
	Take $u\in W$, and let $w$ be the other vertex of $W$.
	If $u\to c$ is present for some $c\in C$, then
	$w\to u\to c$, followed by a directed Hamilton path
	of $D[C]$ starting at $c$, gives a directed path on
	$r+1$ vertices.
	If $c\to u$ is present, a directed Hamilton path of
	$D[C]$ ending at $c$, followed by $c\to u\to w$,
	gives the same contradiction.
	Hence no arc joins $W$ to $C$.
	
	Finally, suppose that all arcs between $z$ and $C$
	are directed from $z$ to $C$.
	For any $u\in W$, an arc $u\to z$ could be followed
	by a directed Hamilton path of $H$ starting at $z$,
	giving a directed path on $r+1$ vertices.
	Thus only $z\to x$ and $z\to y$ are possible.
	If instead all arcs are directed from $C$ to $z$,
	the analogous argument using a directed Hamilton path
	of $H$ ending at $z$ rules out $z\to x$ and $z\to y$.
	Consequently, in either case,
	\[
	a_D(W,\{z\})\le2.
	\]
	Therefore,
	\[
	a(D)
	=a(H)+a_D(W,C)+a_D(W,\{z\})+a(D[W])
	\le(r-1)^2+4<r^2,
	\]
	where the final inequality follows from $r\ge4$.
	This completes the proof.
\end{proof}

\begin{lemma}\label{lem:directed-bound}
For every integer $s\ge3$,
\[
\exdir(s,\dP_{s-1})\le
 \begin{cases}
  0,&s=3,\\
  4,&s=4,\\
  10,&s=5,\\
  (s-2)^2,&s\ge6.
 \end{cases}
\]
\end{lemma}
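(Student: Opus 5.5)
The plan is to handle the small cases $s=3,4,5$ directly and then treat $s\ge6$ by looking at the length of a longest directed path. Throughout, let $D$ be a digraph on $s$ vertices with no copy of $\dP_{s-1}$, and let $r$ denote the number of vertices of a longest directed path in $D$, so that $r\le s-2$.

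For $s=3$, the condition is that $D$ has no $\dP_2$, which means $D$ has no arcs, so $a(D)=0$. For $s=4$, the condition is that $D$ has no $\dP_3$, and Lemma~\ref{lem:short-path} gives $a(D)\le\lfloor16/4\rfloor=4$. For $s=5$, $D$ has no $\dP_4$, so $r\le3$. If $r\le2$, then $D$ has no $\dP_3$ and Lemma~\ref{lem:short-path} gives $a(D)\le\lfloor 25/4\rfloor=6$. If $r=3$, fix a longest path $P$ on $3$ vertices and let $W$ be the two outside vertices. Then $a(D[V(P)])\le 6$, Lemma~\ref{lem:insertion} gives $a_D(W,V(P))\le 4$, and $a(D[W])\le2$, so the crude total is $12$. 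I expect this small case to need a sharper count to reach $10$. Specifically, if $D[V(P)]$ contains a directed Hamilton cycle, then no arc joins $W$ to $V(P)$ (as in the proof of Lemma~\ref{lem:two-outside}), so $a(D)\le 6+2=8$. Otherwise Lemma~\ref{lem:cycle-density} gives $a(D[V(P)])\le4$, and the total is at most $4+4+2=10$.

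For $s\ge6$, I will split according to the value of $r$.

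\emph{Case $r=s-2$.} Here $r\ge4$, so Lemma~\ref{lem:two-outside} applies directly and gives $a(D)\le(s-2)^2$.

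\emph{Case $r\le s-3$.} I plan to argue by induction on $s$ using a monotonicity observation: any digraph with no $\dP_{r+1}$ on $s$ vertices has no $\dP_{s-2}$ when $r+1\le s-2$. This reduces the problem to bounding digraphs on $s$ vertices with longest path at most $s-3$. A cleaner route is a direct count. Take a longest path $P$ on $r$ vertices and let $W=V(D)\setminus V(P)$, with $|W|=s-r$. Lemma~\ref{lem:insertion} gives $a_D(W,V(P))\le (s-r)(r-1)$. The subdigraph $D[W]$ contains no path longer than $r$. Bounding $a(D[V(P)])\le r(r-1)$ and $a(D[W])$ recursively, or crudely by $(s-r)(s-r-1)$, gives $a(D)\le r(r-1)+(s-r)(r-1)+a(D[W])$. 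For mid-range $r$ this crude bound is too big, so I will instead use induction on the statement "a digraph on $m$ vertices whose longest path has at most $k$ vertices has at most $m(k-1)$ arcs" in a suitable form, or the general Zhou--Li path bound $\exdir(s,\dP_{k+1})$. Alternatively, I can delete an outside vertex $x$, which has total degree at most $r-1+$ (its degree into $W$), and induct. The cleanest option seems to be removing vertices of $W$ one at a time: each vertex removed has at most $r-1$ arcs to $P$ plus arcs within $W$. Summing gives $a(D)\le r(r-1)+|W|(r-1)+\binom{|W|}{2}\cdot 2\cdot[\text{path bound in }W]$.

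The main obstacle is this case $r\le s-3$ with $r$ small relative to $s$. There the crude bound fails and one genuinely needs a global Erd\H{o}s--Gallai type bound for directed paths, namely that a digraph with longest path on $r$ vertices has at most $s(r-1)$ arcs. I would first try to establish this by induction: delete an endpoint-type vertex of low degree, or apply Lemma~\ref{lem:insertion} to every outside vertex together with the recursive bound on $D[W]$. With that bound in hand, $s(r-1)\le s(s-4)\le(s-2)^2$ for $r\le s-3$, which closes the argument.
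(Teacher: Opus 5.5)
Your treatment of $s=3,4,5$ and of the case $r=s-2$ for $s\ge6$ is the same as the paper's. The gap is in the case $r\le s-3$. The bound you finally rely on there is that a digraph on $s$ vertices whose longest directed path has $r$ vertices has at most $s(r-1)$ arcs. This is false, because directed paths admit no linear Erd\H{o}s--Gallai bound. Put all arcs from a set $A$ to a set $B$ with $|A|=\lfloor s/2\rfloor$ and $|B|=\lceil s/2\rceil$. This gives $\lfloor s^2/4\rfloor$ arcs with $r=2$, which is exactly the value in Lemma~\ref{lem:short-path} and exceeds $s(r-1)=s$ for $s\ge5$. Similarly, three layers of sizes $3,3,2$ with all arcs pointing forward give $s=8$ and $r=3$, yet $21>16$ arcs. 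So your closing inequality $s(r-1)\le(s-2)^2$ has no valid premise. The other routes you list are left unfinished; in particular, the formula for removing vertices of $W$ one at a time is not a concrete bound.

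The missing ingredient is a single vertex deletion with an explicit degree bound, fed into induction on $s$ for the statement itself. The paper deletes the last vertex $v_r$ of a longest path $P$. By maximality, $v_r$ has no out-arc leaving $V(P)$, so $d_D(v_r)\le2(r-1)+(s-r)=s+r-2\le2s-5$. Meanwhile $D-v_r$ has $s-1$ vertices and no $\dP_{s-2}$, so induction gives $a(D)\le(s-3)^2+2s-5=(s-2)^2$. Your own suggestion of deleting an outside vertex $x$ would also work once finished: $d_D(x)\le(r-1)+2(s-r-1)=2s-r-3\le2s-5$ for $r\ge2$, and $r\le1$ forces $a(D)=0$.

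In either version the induction cannot start at $s=5$, because the bound $10$ there exceeds $(5-2)^2=9$. The paper therefore checks $s=6$ directly:
for $r=4$ it uses Lemma~\ref{lem:two-outside};
for $r\le2$, Lemma~\ref{lem:short-path} gives $9$;
for $r=3$, it gets either $6+6=12$ or $(3-1)^2+3\cdot2+3\cdot2=16$, depending on whether $D[V(P)]$ has a Hamilton cycle.
It then inducts only for $s\ge7$. Your proposal supplies no such base case.
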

\begin{proof}
For $s=3$, avoiding $\dP_2$ means having no arcs. For $s=4$, the required
bound is Lemma~\ref{lem:short-path}.

Consider $s=5$, and let $D$ contain no $\dP_4$. A longest path has at most
three vertices. If it has at most two, Lemma~\ref{lem:short-path} gives
$a(D)\le6$. Otherwise, let $H$ be induced by its three vertices. If $H$
has a Hamilton cycle, the argument in Lemma~\ref{lem:two-outside} shows
that no arc joins it to the other two vertices, and $a(D)\le6+2=8$.
If it has no Hamilton cycle, Lemmas~\ref{lem:cycle-density} and
\ref{lem:insertion} give
\[
 a(D)\le(3-1)^2+2(3-1)+2=10.
\]

We prove the bound for $s\ge6$ by induction on $s$.
For the base case $s=6$, let $D$ contain no $\dP_5$, and let $r\le4$ be
the order of a longest path $P$. If $r=4$, Lemma~\ref{lem:two-outside}
gives $a(D)\le16$. If $r\le2$, Lemma~\ref{lem:short-path} gives
$a(D)\le9$. If $r=3$, put $H=D[V(P)]$. When $H$ has a Hamilton cycle,
there are no arcs between $H$ and its complement, so $a(D)\le6+6=12$.
Otherwise, Lemmas~\ref{lem:cycle-density} and \ref{lem:insertion} give
\[
 a(D)\le(3-1)^2+3(3-1)+3(3-1)=16=(6-2)^2.
\]
The last term bounds all arcs among the three vertices outside $P$.

Now let $s\ge7$ and assume the assertion for $s-1$. Let $D$ contain no
$\dP_{s-1}$, and choose a longest path $P=v_1\to\cdots\to v_r$.
Then $r\le s-2$. If $r=s-2$, Lemma~\ref{lem:two-outside} gives the
required bound directly. Suppose $r\le s-3$. The last vertex $v_r$ has
no outgoing arc to a vertex outside $P$. At most $2(r-1)$ arcs incident
with $v_r$ have their other endpoint on $P$, and at most $s-r$ have their
other endpoint outside $P$. Thus
\[
 d_D(v_r)\le2(r-1)+(s-r)=s+r-2\le2s-5.
\]
Since every path in $D$ has at most $s-3$ vertices, the digraph $D-v_r$
contains no $\dP_{s-2}$. It has $s-1$ vertices, so the induction hypothesis
gives $a(D-v_r)\le(s-3)^2$. Deleting a vertex removes exactly its total
degree many arcs. Consequently,
\[
 a(D)=a(D-v_r)+d_D(v_r)
 \le(s-3)^2+2s-5=(s-2)^2.
\]
This completes the induction.
\end{proof}
\begin{lemma}\label{lem:comparison}
For every integer $s\ge3$, $2\exdir(s,\dP_{s-1})+s\le \AR(2s,sP_2)$.
\end{lemma}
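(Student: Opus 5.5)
The plan is a direct numerical comparison: bound $\exdir(s,\dP_{s-1})$ by Lemma~\ref{lem:directed-bound}, evaluate $\AR(2s,sP_2)$ by Lemma~\ref{lem:HY}, and check the inequality separately in each regime. No structural argument is needed, since both sides are now explicit functions of $s$.

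First, the small cases, each by direct arithmetic.
\begin{itemize}
\item For $s=3$: the left side is at most $2\cdot0+3=3$, and $\AR(6,3P_2)=\frac{1\cdot10}{2}+1=6$.
\item For $s=4$: the left side is at most $8+4=12$, and $\AR(8,4P_2)=\frac{2\cdot13}{2}+1=14$.
\item For $s=5$: the left side is at most $20+5=25$, and $\AR(10,5P_2)=\frac{3\cdot16}{2}+1=25$, so equality is possible.
\end{itemize}

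Second, the range $s=6$. Lemma~\ref{lem:directed-bound} gives a left side of at most $2\cdot16+6=38$. Lemma~\ref{lem:HY} gives $\AR(12,6P_2)=\frac{4\cdot19}{2}+1=39$.

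Third, the range $s\ge7$. Here the left side is at most $2(s-2)^2+s$. The right side is $(s-2)(2s-3)+2=2(s-2)^2+(s-2)+2=2(s-2)^2+s$, so the inequality holds, with equality.

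The main obstacle is only bookkeeping. One must make sure the three regimes of Lemma~\ref{lem:HY} ($s=2$, $3\le s\le6$, $s\ge7$) are matched correctly against the four cases of Lemma~\ref{lem:directed-bound}. In particular, the case $s=6$ must use the middle formula of Lemma~\ref{lem:HY}, not $(s-2)(2s-3)+2$.

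The cases $s=5$ and $s\ge7$ are tight. This is why Lemma~\ref{lem:directed-bound} needed exactly those constants, and no slack can be spared anywhere.
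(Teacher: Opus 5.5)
Your proposal is correct and is exactly the paper's argument: you insert the bounds $0,4,10,16,(s-2)^2$ from Lemma~\ref{lem:directed-bound} and the values $6,14,25,39,(s-2)(2s-3)+2$ from Lemma~\ref{lem:HY}, then verify $3\le6$, $12\le14$, $25\le25$, $38\le39$, and $2(s-2)^2+s=(s-2)(2s-3)+2$ for $s\ge7$. The only quibble is your closing remark that no slack can be spared anywhere, which overstates matters because the cases $s=3,4,6$ do have slack; this does not affect the proof.
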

\begin{proof}
By Lemma~\ref{lem:HY}, the values of $\AR(2s,sP_2)$ for $s=3,4,5,6$ are
$6,14,25,39$, respectively. Lemma~\ref{lem:directed-bound} gives
\begin{align*}
& 2\exdir(3,\dP_{2})+3\le3\le6,\quad
2\exdir(4,\dP_{3})+4\le12\le14,\quad\\
&2\exdir(5,\dP_{4})+5\le25,\quad\quad\,\,
2\exdir(6,\dP_{5})+6\le38\le39.	
\end{align*}

For $s\ge7$, the two lemmas give
\[
 2\exdir(s,\dP_{s-1})+s\le2(s-2)^2+s
 =(s-2)(2s-3)+2=\AR(2s,sP_2).
\]
\end{proof}

\section{Main results}

We first consider $P_{2s-2}\cup P_2$, which contains every even linear
forest on $2s$ vertices having an isolated edge as a spanning subgraph.
The proof combines the directed-path bounds from Section~2 with an
endpoint labeling of a rainbow perfect matching that maximizes the arc
count of an auxiliary digraph. The result for all such forests then
follows by subgraph monotonicity and the perfect-matching formula.

\begin{theorem}\label{thm:central}
	For every integer $s\ge3$,
	\[
	\AR(2s,P_{2s-2}\cup P_2)=\AR(2s,sP_2).
	\]
\end{theorem}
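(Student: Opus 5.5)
The lower bound is immediate from monotonicity. Since $sP_2\subseteq P_{2s-2}\cup P_2$, every coloring of $K_{2s}$ with no rainbow $sP_2$ also has no rainbow $P_{2s-2}\cup P_2$. Hence $\AR(2s,P_{2s-2}\cup P_2)\ge\AR(2s,sP_2)$.

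For the upper bound, take a coloring $c$ of $K_{2s}$ with $\AR(2s,sP_2)+1$ colors. We will produce a rainbow $P_{2s-2}\cup P_2$. By definition of $\AR$, $c$ contains a rainbow perfect matching $M=\{x_1y_1,\dots,x_sy_s\}$. Extend its edge set to a representative graph $G$ by choosing one edge of each remaining color. Then $G$ is rainbow, contains $M$, and has $e(G)-s=\AR(2s,sP_2)+1-s$ nonmatching edges.

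For a labeling that designates, for each $i$, one endpoint of $x_iy_i$ as its tail $t_i$ and the other as its head $h_i$, define a digraph $D$ on $[s]$ by putting $i\to j$ whenever $h_it_j\in E(G)$. A directed path $i_1\to\cdots\to i_r$ in $D$ expands to the path
\[
t_{i_1}h_{i_1}t_{i_2}h_{i_2}\cdots t_{i_r}h_{i_r}
\]
in $G$ on $2r$ vertices. This path uses the $r$ matching edges and $r-1$ distinct nonmatching edges, so it is rainbow. If $r=s-1$, the one unused matching edge is vertex-disjoint and rainbow with the rest, which gives a rainbow $P_{2s-2}\cup P_2$. It therefore suffices to show that $D$ contains $\dP_{s-1}$ for a suitable labeling.

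Choose the labeling maximizing $a(D)$. Each nonmatching edge of $G$ joins endpoints of two distinct matching edges $i\neq j$, and it yields an arc between $i$ and $j$ exactly when it goes head-to-tail. Consider the pair $\{i,j\}$. For each of the four orientation choices (swap or not at $i$, swap or not at $j$), every one of the up to four edges between $\{x_i,y_i\}$ and $\{x_j,y_j\}$ contributes to exactly two of the four choices. Thus averaging over independent random swaps shows that the expected number of arcs is at least half the number of nonmatching edges. The maximizing labeling then satisfies
\[
a(D)\ge\tfrac12\bigl(\AR(2s,sP_2)+1-s\bigr).
\]
To avoid a probabilistic argument, the same count can be done by comparing the labeling with single swaps, as the paper suggests: swapping on matching edge $i$ turns each head-tail incidence at $i$ into its complement, so local optimality gives at least half at each $i$; summing over $i$ double-counts each edge, which again yields the bound. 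In either form, Lemma~\ref{lem:comparison} gives
\[
\AR(2s,sP_2)\ge 2\exdir(s,\dP_{s-1})+s,
\]
hence $a(D)\ge\exdir(s,\dP_{s-1})+\tfrac12>\exdir(s,\dP_{s-1})$. So $D$ contains $\dP_{s-1}$, and the argument is complete.

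The main obstacle is making the halving step rigorous: both the exact counting of head-tail edges and verifying that "at least half" survives when one takes a maximum rather than an average. Averaging over all $2^s$ labelings handles this cleanly, since each nonmatching edge is head-to-tail in exactly half of them (in one orientation, with the arcs $i\to j$ and $j\to i$ being distinct in $D$). Loops cannot arise because matching edges themselves are not counted, and parallel arcs cannot arise because the representation uses at most one arc per direction. The remaining worry is the direction double-count: edge $x_ix_j$ gives $i\to j$ when $h_i=x_i$ and $t_j=x_j$, which has probability $1/4$, and gives $j\to i$ also with probability $1/4$. These are distinct arcs, so the total probability is $1/2$. Different edges between the same pair may map to the same arc, and this must be checked carefully. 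For a fixed labeling, the four edges between $\{x_i,y_i\}$ and $\{x_j,y_j\}$ realize each of $i\to j$ and $j\to i$ through exactly one specific edge, so there are no collisions.
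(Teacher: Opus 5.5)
Your proof is correct and follows the paper's argument. Both take a rainbow perfect matching inside a representative graph and build the head/tail digraph on the matching edges; a halving bound on $a(D)$ combined with Lemma~\ref{lem:comparison} then forces a $\dP_{s-1}$, which expands into a rainbow $P_{2s-2}$ plus the unused matching edge. The only difference is that your main halving argument averages over all $2^s$ labelings, while the paper uses the single-swap local-optimality count $2c_i\ge t_i$ that you also give; both are valid.
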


\begin{proof}
	The forest $P_{2s-2}\cup P_2$ contains a perfect matching.
	Consequently, every edge-coloring of $K_{2s}$ without
	a rainbow $sP_2$ also avoids a rainbow $P_{2s-2}\cup P_2$.
	Thus
	\[
	\AR(2s,P_{2s-2}\cup P_2)\ge\AR(2s,sP_2).
	\]
	
	For the reverse inequality, consider an edge-coloring
	of $K_{2s}$ using $q>\AR(2s,sP_2)$ colors.
	By the definition of the anti-Ramsey number, this coloring
	contains a rainbow perfect matching $M$.
	
	Choose one representative edge of each color, choosing
	the corresponding edge of $M$ whenever that color appears
	on $M$. These choices are compatible because the edges
	of $M$ have distinct colors. The resulting spanning
	graph $G$ contains $M$, satisfies $e(G)=q$, and has
	pairwise distinctly colored edges.	
	Write
	\[
	M=\{u_i v_i:1\le i\le s\}.
	\]
	For each $i$, there are two ways to assign the endpoint
	labels $a_i,b_i$:
	\[
	(a_i,b_i)=(u_i,v_i)
	\quad\text{or}\quad
	(a_i,b_i)=(v_i,u_i).
	\]
	Hence there are $2^s$ possible complete labelings of $M$.
	For each labeling, define a digraph $D$ on $\{1,\ldots,s\}$ by
	\[
	i\to j
	\quad\Longleftrightarrow\quad
	b_i a_j\in E(G),
	\qquad i\ne j.
	\]
	The direction goes from the index of the $b$-endpoint
	to the index of the $a$-endpoint, regardless of the
	numerical order of the indices. More explicitly,
	for two distinct matching edges indexed by $i$ and $j$,
	\[
	\begin{array}{c|c}
		\text{Edge of }G & \text{Contribution to }D\\ \hline
		b_i a_j=a_j b_i & i\to j\\
		a_i b_j=b_j a_i & j\to i\\
		a_i a_j & \text{no arc}\\
		b_i b_j & \text{no arc}
	\end{array}
	\]
	Thus each nonmatching edge contributes exactly one arc
	when its endpoints have different label types, and
	no arc otherwise. Reversing the written order of its
	endpoints does not create another arc.
	
	The matching edges contribute no arcs because $i\ne j$
	is required. Moreover, for each ordered pair $(i,j)$,
	only the edge $b_i a_j$ can supply the arc $i\to j$.
	Therefore $D$ is a simple loopless digraph, and distinct
	contributing edges supply distinct arcs.
	Write $a(D)$ for its number of arcs.
	
	\begin{claim}\label{cl:max-labeling}
		There exists an endpoint labeling of $M$ for which
		\[
		a(D)\ge\frac{q-s}{2}
		>\frac{\AR(2s,sP_2)-s}{2}.
		\]
	\end{claim}
	
	\begin{proof}[Proof of the claim]
		Choose a labeling maximizing $a(D)$, which is possible
		because there are only $2^s$ labelings.
		
		For each $i$, let $t_i$ be the number of edges in
		$E(G)\setminus E(M)$ having one endpoint in
		$\{a_i,b_i\}$, and let $c_i$ count those edges that
		contribute arcs to $D$.
		Equivalently, $c_i$ is the number of arcs of $D$
		incident with $i$.
		
		Interchange the labels $a_i$ and $b_i$, keeping all
		other labels fixed, and let $D_i$ be the resulting
		digraph. Each nonmatching edge incident with
		$\{a_i,b_i\}$ has exactly one endpoint whose label
		type changes. After the interchange, each of the $c_i$ previously
		contributing edges has either two $a$-endpoints or two
		$b$-endpoints and therefore contributes no arc.
		Each of the remaining $t_i-c_i$ edges now has one
		$a$-endpoint and one $b$-endpoint and consequently
		contributes exactly one arc.
		All arcs with neither endpoint $i$ remain unchanged.
		Hence
		\[
		a(D_i)=a(D)-c_i+(t_i-c_i)
		=a(D)+t_i-2c_i.
		\]
		By the maximality of $a(D)$, we have $a(D_i)\le a(D)$.
		Therefore
		\[
		2c_i\ge t_i
		\qquad (1\le i\le s).
		\]
		
		Every arc of $D$ is counted at each of its two ends,
		so 
		\[\sum_{i=1}^s c_i=2a(D).\]
		Similarly, every nonmatching edge joins two distinct
		matching edges and is counted in the corresponding
		two quantities $t_i$. Thus
		\[
		\sum_{i=1}^s t_i
		=2|E(G)\setminus E(M)|
		=2(q-s).
		\]
		Summing $2c_i\ge t_i$ gives
		\[
		4a(D)
		=2\sum_{i=1}^s c_i
		\ge\sum_{i=1}^s t_i
		=2(q-s).
		\]
		Consequently,
		\[
		a(D)\ge\frac{q-s}{2}
		>\frac{\AR(2s,sP_2)-s}{2},
		\]
		where the strict inequality follows from
		$q>\AR(2s,sP_2)$.
	\end{proof}
	
	Fix the labeling supplied by Claim~\ref{cl:max-labeling}.
	By the directed-path arc bound and
	Lemma~\ref{lem:comparison}, every digraph on $s$ vertices
	containing no directed path on $s-1$ vertices has at most
	\[
	\frac{\AR(2s,sP_2)-s}{2}
	\]
	arcs. Our digraph exceeds this bound, so it contains
	a directed path
	\[
	i_1\to i_2\to\cdots\to i_{s-1},
	\]
	where the indices are distinct.
	
	For each $1\le h\le s-2$, the definition of $D$ gives
	\[
	b_{i_h}a_{i_{h+1}}\in E(G).
	\]
	Combining these connecting edges with the matching
	edges $a_{i_h}b_{i_h}$, for $1\le h\le s-1$, produces
	the path
	\[
	a_{i_1}b_{i_1}a_{i_2}b_{i_2}\cdots
	a_{i_{s-1}}b_{i_{s-1}}
	\]
	in $G$. Its $2s-2$ vertices are distinct because
	the indices are distinct and the matching edges
	are pairwise disjoint.
	
	Exactly one index $j$ is absent from
	$\{i_1,\ldots,i_{s-1}\}$. The matching edge $a_jb_j$
	is disjoint from this path. Together, the path and
	this unused matching edge form a copy of
	$P_{2s-2}\cup P_2$.
	
	All edges of this copy belong to $G$ and therefore
	have distinct colors. Thus the copy is rainbow.
	We have shown that every edge-coloring of $K_{2s}$
	using more than $\AR(2s,sP_2)$ colors contains a rainbow
	$P_{2s-2}\cup P_2$. Consequently,
	\[
	\AR(2s,P_{2s-2}\cup P_2)\le\AR(2s,sP_2).
	\]
	Together with the reverse inequality established above,
	this proves the theorem.
\end{proof}

\begin{theorem}\label{thm:even-forest}
Let $F$ be an even linear forest on $n=2s\ge6$ vertices, and suppose that
$F$ has at least one component isomorphic to $P_2$. Then
\[
 \AR(n,F)=\AR(n,sP_2)
 =\begin{cases}
 \dfrac{(s-2)(3s+1)}2+1,&3\le s\le6,\\[2mm]
 (s-2)(2s-3)+2,&s\ge7.
 \end{cases}
\]
\end{theorem}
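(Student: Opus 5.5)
The plan is a sandwich argument: squeeze $F$ between $sP_2$ and $P_{2s-2}\cup P_2$, then apply Theorem~\ref{thm:central} and Lemma~\ref{lem:HY}.

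The only tool needed is subgraph monotonicity of anti-Ramsey numbers at a fixed host order. If $F_1\subseteq F_2$ and both have order at most $n$, then $\AR(n,F_1)\le\AR(n,F_2)$. Indeed, any coloring of $K_n$ with no rainbow $F_2$ also has no rainbow $F_1$, since a rainbow copy of $F_1$ cannot be extended to a rainbow copy of $F_2$ unless one exists.

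The main combinatorial step is to verify the containments
\[
sP_2\subseteq F\subseteq P_{2s-2}\cup P_2 .
\]
Write $F\cong P_{2a_1}\cup\cdots\cup P_{2a_q}\cup tP_2$ with $t\ge1$ and $\sum a_i+t=s$. For the left containment, each $P_{2a_i}$ has a perfect matching consisting of its first, third, fifth, \dots\ edges. So $F$ contains $\sum a_i + t = s$ disjoint edges.

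For the right containment, set aside one isolated edge of $F$. The remaining components $P_{2a_1},\dots,P_{2a_q},(t-1)P_2$ have total order $2s-2$. Since $P_{2s-2}$ is a path, deleting some of its edges splits it into consecutive subpaths of any prescribed orders summing to $2s-2$. Hence these components form a spanning subgraph of $P_{2s-2}$, and adding back the set-aside edge gives $F\subseteq P_{2s-2}\cup P_2$. The degenerate cases are harmless: $q=0$ gives $F=sP_2$, and $t=1$ with $q=1$ gives $F=P_{2s-2}\cup P_2$ itself.

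Monotonicity then yields
\[
\AR(2s,sP_2)\le\AR(2s,F)\le\AR(2s,P_{2s-2}\cup P_2)=\AR(2s,sP_2),
\]
where the equality is Theorem~\ref{thm:central}, applicable because $n\ge6$ means $s\ge3$. So all three quantities coincide. Finally, I substitute the value of $\AR(2s,sP_2)$ from Lemma~\ref{lem:HY} for $s\ge3$; the case $s=2$ is excluded by hypothesis.

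I do not expect a real obstacle, since the difficult work is already done in Theorem~\ref{thm:central}. The step needing the most care is the containment $F\subseteq P_{2s-2}\cup P_2$. It depends on $F$ having an isolated edge to serve as the separate $P_2$ component, and on the observation that a path contains every linear forest of the same order.
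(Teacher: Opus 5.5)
Your proposal is correct and is essentially the paper's proof: the same sandwich $sP_2\subseteq F\subseteq P_{2s-2}\cup P_2$ (using perfect matchings of the even paths and cutting $P_{2s-2}$ into consecutive blocks), followed by subgraph monotonicity, Theorem~\ref{thm:central}, and Lemma~\ref{lem:HY}. One slip to fix is that your justification of monotonicity runs the wrong way: a coloring with no rainbow $F_1$ has no rainbow $F_2$, because any rainbow $F_2$ contains a rainbow $F_1$, and that is what gives $\AR(n,F_1)\le\AR(n,F_2)$.
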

\begin{proof}
Every even-order path has a perfect matching, obtained by pairing consecutive
vertices along the path. Taking these matchings over all components gives
$sP_2\subseteq F$.

Choose one $P_2$ component of $F$. Since the remaining
path components have total order $n-2$, partition the
vertices of $P_{n-2}$ into consecutive blocks whose orders
equal those of these components. Deleting the edges
between successive blocks leaves paths isomorphic to
the remaining components of $F$. Together with the
separate $P_2$, these paths form a spanning copy of $F$
in $P_{n-2}\cup P_2$.

If $H\subseteq H'$, every coloring avoiding a rainbow $H$ also avoids a
rainbow $H'$, so $\AR(n,H)\le\AR(n,H')$. Applying this observation to
the two containments and using Theorem~\ref{thm:central} gives
\[
 \AR(n,sP_2)\le\AR(n,F)
 \le\AR(n,P_{n-2}\cup P_2)=\AR(n,sP_2).
\]
The equality of the anti-Ramsey numbers follows, and their explicit values
are given by Lemma~\ref{lem:HY}.
\end{proof}

\section{Conclusion}

Every spanning even linear forest with an isolated edge has the same
anti-Ramsey number as a perfect matching on the same vertex set.
Thus, within this family, the value depends only on the total order,
regardless of the number and orders of the path components. The proof
reduces the problem to $P_{n-2}\cup P_2$ and combines a maximizing
endpoint labeling with directed-path bounds. This extends the known
result for $kP_4\cup tP_2$ to arbitrary even component orders.

\section*{Conflict of interest}
The authors declare that they have no conflict of interest.

\section*{Data availability} 

No data was used in this investigation.

\section*{Use of artificial intelligence}

The author used AI tools for algebraic checks, language editing,
locating relevant auxiliary results, and examining proof ideas.
The author reviewed and verified all mathematical statements,
calculations, proofs, and references and takes full responsibility
for the manuscript.

\end{document}